\let\OLDthebibliography\thebibliography
\renewcommand\thebibliography[1]{
  \OLDthebibliography{#1}
  \setlength{\parskip}{0pt}
  \setlength{\itemsep}{0pt plus 0.3ex}
}
\def\numberlikeadb{\global\def\theequation{\thesection.\arabic{equation}}}
\newtheorem{theorem}{Theorem}[section]
\newtheorem{proposition}[theorem]{Proposition}
\newtheorem{remark}[theorem]{Remark}
\begin{document}

\title{Stein operators for variables form the third and fourth Wiener chaoses}
\author{Robert E. Gaunt\footnote{School of Mathematics, The University of Manchester, Manchester M13 9PL, UK}  
}
\date{} 
\maketitle

\begin{abstract}Let $Z$ be a standard normal random variable and let $H_n$ denote the $n$-th Hermite polynomial.  In this note, we obtain Stein equations for the random variables $H_3(Z)$ and $H_4(Z)$, which represents a first step towards developing Stein's method for distributional limits from the third and fourth Wiener chaoses.  Perhaps surprisingly, these Stein equations are fifth and third order linear ordinary differential equations, respectively.  As a warm up, we obtain a Stein equation for the random variable $aZ^2+bZ+c$, $a,b,c\in\mathbb{R}$, which leads us to a Stein equation for the non-central chi-square distribution.  We also provide a discussion as to why obtaining Stein equations for $H_n(Z)$, $n\geq5$, is more challenging. 
\end{abstract}

\noindent{{\bf{Keywords:}}} Stein's method; normal distribution; Hermite polynomial; Wiener chaos; non-central chi-square distribution

\noindent{{{\bf{AMS 2010 Subject Classification:}}} Primary 60F05; 60G15; Secondary 60H07

\section{Introduction}

\subsection{Background}

In 1972, Stein \cite{stein} introduced a powerful technique for deriving distributional bounds for normal approximation.  Stein's method for normal approximation rests on the following characterisation of the normal distribution: $W\sim N(0,1)$ if and only if 
\begin{equation} \label{stein lemma}\mathbb{E}[f'(W)-Wf(W)]=0
\end{equation}
for all real-valued absolutely continuous functions $f$ such that $\mathbb{E}|f'(Z)|<\infty$ for $Z\sim N(0,1)$.  This characterisation leads to the so-called Stein equation: 
\begin{equation} \label{normal equation} f'(x)-xf(x)=h(x)-N h,
\end{equation} 
where $N h$ denotes $\mathbb{E}h(Z)$ for $Z\sim N(0,1)$, and the test function $h$ is real-valued. It is straightforward to verify that $f(x)=\mathrm{e}^{x^2/2}\int_{-\infty}^x[h(t)-N h]\mathrm{e}^{-t^2/2}\,\mathrm{d}t$ solves (\ref{normal equation}), and bounds on the solution and its derivatives in terms of the test function $h$ and its derivatives are given in \cite{chen,dgv}.  Evaluating both sides of (\ref{normal equation}) at a random variable $W$ and taking expectations gives
\begin{equation} \label{expect} \mathbb{E}[f'(W)-Wf(W)]=\mathbb{E}h(W)-N h.
\end{equation}
Thus, the problem of bounding the quantity $|\mathbb{E}h(W)-N h|$ has been reduced to bounding the left-hand side of (\ref{expect}).  For a detailed account of the method see the book \cite{chen}.

In recent years, one of the most significant applications of Stein's method for normal approximation has been to Gaussian analysis on Wiener space.  This body of research was initiated by \cite{np09}, in which Stein's method and Malliavin calculus are combined to derive a quantitative fourth moment theorem for the normal approximation of a sequence of random variables living in a fixed Wiener chaos. A detailed account of normal approximation by the Malliavin-Stein method can be found in the book \cite{np12}.

One of the advantages of Stein's method is that the above procedure can be extended to treat many other distributional approximations; examples include the Poisson \cite{chen 0}, gamma \cite{gaunt chi square, luk}, exponential \cite{chatterjee, pekoz1} and variance-gamma distributions \cite{gaunt vg}.  The Malliavin-Stein method is also applicable to other limits, such as the multivariate normal \cite{npr10}, exponential and Pearson families \cite{eden1,eden2}, centered gamma \cite{dp16, np09}, variance-gamma \cite{eichelsbacher}, linear combinations of centered chi-square random variables \cite{aaps16, aaps17, azmooden, npoly12}, as well as a large family of distributions, such as the uniform, log-normal and Pareto distributions, that satisfy a diffusive assumption \cite{kt12}.  These works have resulted in analogues of the celebrated fourth moment theorem for non-normal limits. 

However, despite these advances, there are many important distributional limits that fall outside the current state of the art of the Malliavin-Stein method.  As noted by \cite{peccati14}, an important class of limits for which little is understood are those of the type $P(Z)$, where $Z\sim N(0,1)$ and $P$ is polynomial of degree strictly greater than 2.  In particular, the case that $P$ is a Hermite polynomial is of particular interest, due to their fundamental role in Gaussian analysis and Malliavin calculus.  

Let $H_n(x)$ be the $n$-th Hermite polynomial, defined by $H_n(x)=(-1)^n\mathrm{e}^{x^2/2}\frac{\mathrm{d}^n}{\mathrm{d}x^n}(\mathrm{e}^{-x^2/2})$, $n\geq1$, and $H_0(x)=1$.  The first six Hermite polynomials are then
\begin{align*}&H_1(x)=x, \quad H_2(x)=x^2-1, \quad H_3(x)=x^3-3x, \quad H_4(x)=x^4-6x^2+3, \\
&H_5(x)=x^5-10x^3+15x, \quad H_6(x)=x^6-15x^4+45x^2-15.
\end{align*}
In this note, we consider the problem of extending Stein's method to the random variables $H_n(Z)$.  Of course, the case $n=1$ is very well understood.  The case $n=2$ corresponds to the centered chi-square random variable $Z^2-1$. This is a special case of the centered gamma distribution, for which the Malliavin-Stein method is also highly tractable; see \cite{acp14,ammp16,dp16,np09}.  As with Stein's method for normal approximation, at the heart of Stein's method for $H_2(Z)=Z^2-1$ is the Stein equation
\begin{equation}\label{chistein}2(1+x)f'(x)-xf(x)=h(x)-N_2h,
\end{equation}  
where $N_ih$ denotes the quantity $\mathbb{E}[h(H_i(Z))]$, $i\geq1$.  The Stein equation (\ref{chistein}) follows by applying a simple translation to the classic gamma Stein equation of \cite{diaconis, luk}.  Estimates for the solution of the gamma Stein equation \cite{dgv, dp16, gaunt chi square, luk} can then be used to bound the solution of (\ref{chistein}) and its derivatives.  The problem of approximating a random variable $W$ by $H_2(Z)$ is thus reduced to the tractable one of bounding the quantity $\mathbb{E}[2(1+W)f'(W)-Wf(W)]$.  

Recently, \cite{aaps17} obtained a Stein equation for random variables of the form $F_\infty=\sum_{i=1}^q \alpha_i(Z_i^2-1)$, where the $\alpha_i$ are real-valued constants and $Z_1,\ldots,Z_q$ are independent $N(0,1)$ random variables.  Their Stein equation for $F_\infty$ was $q$-th order if the $\alpha_i$ are all distinct, and to date no bound exists for the solution of the Stein equation when $q\geq3$; the case $q=2$ corresponds to the variance-gamma Stein equation, and bounds from \cite{dgv, gaunt vg} can be used.  Despite not being able to bound the solution of the Stein equation in general, their Stein equation motivated a Stein kernel for $F_\infty$ that was used in \cite{aaps16} to bound the 2-Wasserstein distance between a random variable $F$ belonging to the second Wiener chaos and the limit $F_\infty$.  
 
\subsection{Summary of results}

In this note, we consider the problem of extending Stein's method to $H_n(Z)$, $n\geq3$.  In particular, we study the problem of finding Stein equations for $H_n(Z)$, $n\geq3$.  Indeed, our main results are the following Stein equations (see Propositions \ref{prop2.3} and \ref{prop2.4}) for $H_3(Z)$ and $H_4(Z)$:
\begin{align}&486(4-x^2)f^{(5)}(x)-486xf^{(4)}(x)-27(8-x^2)f^{(3)}(x)\nonumber\\
\label{herm3}&\quad+99xf''(x)+6f'(x)-xf(x)=h(x)-N_3h,
\end{align}
and
\begin{align}&192(x+6)(3-x)f^{(3)}(x)+16(x+3)(x-12)f''(x)\nonumber\\
\label{herm4}&\quad+4(11x+6)f'(x)-xf(x)=h(x)-N_4h.
\end{align}
The first striking feature of the Stein equations (\ref{herm3}) and (\ref{herm4}) is that they are fifth and third order linear differential equations, respectively.  There is a dramatic increase in complexity from the Stein equations (\ref{normal equation}) and (\ref{chistein}) for $H_1(Z)$ and $H_2(Z)$ to the Stein equation for $H_3(Z)$.  Such higher order Stein operators were uncommon in the literature, with the exception of \cite{goldstein3}, but following the introduction of second order operators by \cite{gaunt vg, pekoz, pike} and a $n$-th order operator for the product of $n$ independent $N(0,1)$ random variables \cite{gaunt pn}, have appeared in several recent papers, and an overview is given in \cite{gms16}.  It is also perhaps surprising that the Stein equation for $H_4(Z)$ is of lower order than the Stein equation for $H_3(Z)$.  We shed some light on this feature in Remark \ref{rem3}.  

As (\ref{herm3}) and (\ref{herm4}) are differential equations of order greater than two, neither can be readily obtained by the standard generator \cite{barbour2, gotze} or density approaches \cite{ley, stein2}.  To obtain the Stein equations (\ref{herm3}) and (\ref{herm4}) we employ a recent technique that was introduced in Section 4.3 of \cite{gms16}.  The same technique could in principle be used to obtain Stein equations for $H_n(Z)$, $n\geq5$, although this would most likely lead to more involved calculations; see Section \ref{sec2.3} for further comments.  In Proposition \ref{prop2.1}, we also use the technique to obtain a Stein equation for the random variable $aZ^2+bZ+c$.  That Stein equation is new and serves as a simple illustration of the technique.  Moreover, this result leads to a Stein equation for the non-central chi square distribution (see Proposition \ref{prop2.2} and equation (\ref{noncenstein})).  This Stein equation is a second order differential equation and it would be relatively straightforward to solve the equation and bound its solution, which would yield the first two ingredients of Stein's method for non-central chi-square approximation.  

Solving and bounding the solutions of (\ref{herm3}) and (\ref{herm4}) is, however, far more challenging and we were unable to make progress here.  Despite recent advances on this aspect of Stein's method \cite{dgv}, such results are beyond the current state of the art.  However, the Stein equations (\ref{herm3}) and (\ref{herm4}) may yield insight into convergence in the third and fourth Wiener chaoses, in the same way that the Stein equation for linear combinations of chi-square random variables of \cite{aaps17} provided important motivation for approximation of random variables in the second Wiener chaos \cite{aaps16}.  As this note is devoted to the problem of finding Stein equations for $H_n(Z)$, we shall not explore the techniques that may be required for such an investigation, such as a suitable analogue of the Stein kernel (as used by \cite{aaps16}) or iterated gamma operators \cite{np10} that were used by \cite{eichelsbacher} in developing the Malliavin-Stein method for variance-gamma approximation.  Nevertheless, this work can be viewed as a first step towards the  fundamental problem of understanding Stein operators for general functionals of normal random variables, and to the development of Stein's method for limit distributions from the third and fourth Wiener chaoses.  




\section{Stein equations for $P(Z)$}

Throughout this section, we shall let $C^k(I)$ denote the space of functions that are $k$-times differentiable on the interval $I\subseteq \mathbb{R}$.  It is understood that, for a function $h$, $h^{(0)}\equiv h$.  The supremum norm of $f:I\rightarrow\mathbb{R}$ is denoted by $\|f\|=\|f\|_\infty=\sup_{x\in I}|f(x)|$.

\subsection{Stein equations for $aZ^2+bZ+c$ and the non-central chi-sqaure distribution}\label{sec2.1}

The following proposition gives a characterising equation for the random variable $aZ^2+bZ+c$, $a,b,c\in\mathbb{R}$, which leads to a corresponding Stein equation.  The proof provides a simple illustration of a recent technique of \cite{gms16}. 

\begin{proposition}\label{prop2.1}Let $X=aZ^2+bZ+c$, where $a,b,c\in\mathbb{R}$.  Let $f\in C^2((\alpha,\beta))$, where $(\alpha,\beta)\subset\mathbb{R}$ is the range of the quadratic $ax^2+bx+c$.  Suppose further that $\mathbb{E}|Xf^{(k)}(X)|<\infty$ and $\mathbb{E}|f^{(k)}(X)|<\infty$ for all $k=0,1,2$.  Then
\begin{equation}\label{prop2.1eqn}\mathbb{E}\big[(ab^2+4a^2(X-c))f''(X)+(2a^2-b^2-4a(X-c))f'(X)+(X-c-a)f(X)\big]=0.
\end{equation}
\end{proposition}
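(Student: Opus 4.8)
The plan is to reduce the identity \eqref{prop2.1eqn} to the normal Stein identity \eqref{stein lemma}, that is, to the fact that $\mathbb{E}[\Psi'(Z)-Z\Psi(Z)]=0$ for every sufficiently integrable $\Psi$. Write $P(z)=az^2+bz+c$, so that $X=P(Z)$ and $P'(z)=2az+b$. The first observation is that the coefficients in \eqref{prop2.1eqn} are genuinely functions of $X$: since $(P'(z))^2=4a(az^2+bz)+b^2=4a(X-c)+b^2$, the coefficient of $f''$ equals $a(P'(z))^2$ and the coefficient of $f'$ equals $2a^2-(P'(z))^2$, both evaluated at $z=Z$. Denoting by $\mathcal{A}f$ the bracketed expression in \eqref{prop2.1eqn}, the goal becomes to exhibit a function $\Psi$ with $\mathcal{A}f(P(z))=\Psi'(z)-z\Psi(z)$ for all $z$ in the range of $P$; then \eqref{prop2.1eqn} follows at once by setting $z=Z$ and applying \eqref{stein lemma}.

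To find $\Psi$ I would make the ansatz $\Psi(z)=\alpha(z)f'(P(z))+\beta(z)f(P(z))$ with $\alpha,\beta$ polynomials to be determined, and expand $\Psi'(z)-z\Psi(z)$ using the chain rule $\tfrac{d}{dz}f^{(k)}(P(z))=P'(z)f^{(k+1)}(P(z))$. This produces a combination of $f''(P(z))$, $f'(P(z))$, $f(P(z))$ with respective coefficients
\[
\alpha P',\qquad \alpha'+\beta P'-z\alpha,\qquad \beta'-z\beta .
\]
Matching the $f''$-coefficient against $a(P')^2$ forces $\alpha=aP'=2a^2z+ab$; substituting this into the $f'$-coefficient and matching against $2a^2-(P')^2$ then forces $\beta=az-P'=-az-b$.

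The one genuine point to check is the $f$-coefficient. Since $\alpha$ and $\beta$ are already pinned down by the two higher-order equations, the matching is overdetermined, so it is a real (if short) verification that $\beta'-z\beta$ reproduces the coefficient $X-c-a$ of $f$ in \eqref{prop2.1eqn}. A direct computation gives $\beta'-z\beta=-a+(az^2+bz)=(X-c)-a$ at $z=Z$, so all three coefficients match simultaneously and $\mathcal{A}f(P(z))=\Psi'(z)-z\Psi(z)$ holds identically. I regard the consistency of this overdetermined system as the crux of the argument, and the structural reason a quadratic admits a second-order Stein operator, even though here it is guaranteed by the way $\mathcal{A}$ is built.

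Finally I would record that \eqref{stein lemma} may legitimately be applied to $\Psi$. Both $\Psi$ and $\Psi'$ are fixed polynomials in $Z$ multiplied by $f(X),f'(X),f''(X)$, and using $aZ^2=X-c-bZ$ to trade powers of $Z$ for powers of $X$ (together with Cauchy--Schwarz to absorb a single leftover power of $Z$), the hypotheses $\mathbb{E}|f^{(k)}(X)|<\infty$ and $\mathbb{E}|Xf^{(k)}(X)|<\infty$ for $k=0,1,2$ bound $\mathbb{E}|\Psi'(Z)|$ and $\mathbb{E}|Z\Psi(Z)|$. This integrability bookkeeping is routine but is precisely what makes the passage from the pointwise identity to the statement about expectations rigorous.
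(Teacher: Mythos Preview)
Your proof is correct but proceeds quite differently from the paper's. The paper works with $W=aZ^2+bZ$ and repeatedly applies the identity $\mathbb{E}[Z^k f(W)]=2a\,\mathbb{E}[Z^k f'(W)]+b\,\mathbb{E}[Z^{k-1}f'(W)]+(k-1)\mathbb{E}[Z^{k-2}f(W)]$ to expand $\mathbb{E}[Wf(W)]$, producing an ``awkward'' term $\mathbb{E}[Zf''(W)]$; it then obtains two independent expressions for this term and eliminates it by subtraction, arriving at the operator. This is a \emph{discovery} procedure that does not presuppose the form of the operator and is the template the paper reuses for $H_3(Z)$ and $H_4(Z)$. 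Your argument, by contrast, is a \emph{verification}: knowing the operator from the statement, you exhibit explicitly $\Psi(z)=aP'(z)\,f'(P(z))+(az-P'(z))\,f(P(z))$ and check that $\mathcal{A}f(P(z))=\Psi'(z)-z\Psi(z)$ identically, reducing everything to one application of the normal Stein lemma. This is shorter and more transparent about \emph{why} the operator annihilates $X$ (it is literally the pullback of the $N(0,1)$ Stein identity through $P$), at the cost of requiring the answer up front; the paper's iterative elimination buys a method that scales to the later propositions where the target operator is unknown.

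Two minor points. Your phrase ``for all $z$ in the range of $P$'' should read ``for all $z\in\mathbb{R}$'' (it is $P(z)$ that lies in the range of $P$). And your integrability remark via Cauchy--Schwarz is sound: after writing $aZ^2=X-c-bZ$ one is left with terms like $\mathbb{E}|Zf^{(k)}(X)|$, and since $|Z|\le C_1+C_2|X|^{1/2}$ one controls $\mathbb{E}\big[|X|^{1/2}|f^{(k)}(X)|\big]$ by $\big(\mathbb{E}|Xf^{(k)}(X)|\big)^{1/2}\big(\mathbb{E}|f^{(k)}(X)|\big)^{1/2}$, which is finite under the stated hypotheses.
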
 

\begin{proof} To simplify the calculations, we find a characterising equation for the random variable $W=aZ^2+bZ$ and then apply a translation to deduce a characterising equation for $aZ^2+bZ+c$.  Recall from (\ref{stein lemma}) that, for suitable $g$, $\mathbb{E}Zg(Z)=\mathbb{E}g'(Z)$. Then, since $\frac{\mathrm{d}}{\mathrm{d}x}\big(x^{k-1}f(w)\big)=(2ax^{k}+bx^{k-1})f'(w)+(k-1)x^{k-2}f(w)$ for $k\geq1$, where $w=ax^2+bx$, we have that
\begin{equation}\label{for121}\mathbb{E}Z^kf(W)=2a\mathbb{E}Z^{k}f'(W)+b\mathbb{E}Z^{k-1}f'(W)+(k-1)\mathbb{E}Z^{k-2}f(W), \quad k\geq1.
\end{equation}
Using (\ref{for121}) gives that
\begin{align}\mathbb{E}Wf(W)&=a\mathbb{E}Z^2f(W)+b\mathbb{E}Zf(W) \nonumber\\
&=2a^2\mathbb{E}Z^2f'(W)+ab\mathbb{E}Zf'(W)+a\mathbb{E}f(W)+2ab\mathbb{E}Zf'(W)+b^2\mathbb{E}f'(W) \nonumber\\
\label{form01}&=2a\mathbb{E}Wf'(W)+b^2\mathbb{E}f'(W)+a\mathbb{E}f(W)+ab\mathbb{E}Zf'(W).
\end{align}
Another application of (\ref{for121}) gives that
\begin{align}\label{form21}\mathbb{E}Wf(W)&=2a\mathbb{E}Wf'(W)+b^2\mathbb{E}f'(W)+a\mathbb{E}f(W)+2a^2b\mathbb{E}Zf''(W)+ab^2\mathbb{E}f''(W).
\end{align}
On rearranging (\ref{form21}) we obtain
\begin{equation}\label{form31}2a^2b\mathbb{E}Zf''(W)=\mathbb{E}Wf(W)-2a\mathbb{E}Wf'(W)-b^2\mathbb{E}f'(W)-a\mathbb{E}f(W)-ab^2\mathbb{E}f''(W),
\end{equation}
and from (\ref{form01}) applied to $f'$ we obtain
\begin{equation}\label{form41}2a^2b\mathbb{E}Zf''(W)=2a\mathbb{E}Wf'(W)-4a^2\mathbb{E}Wf''(W)-2ab^2\mathbb{E}f''(W)-2a^2\mathbb{E}f'(W).
\end{equation}
Combining (\ref{form31}) and (\ref{form41}) gives
\begin{equation*}\mathbb{E}\big[(ab^2+4a^2W)f''(W)+(2a^2-b^2-4aW)f'(W)+(W-a)f(W)\big]=0.
\end{equation*}
Finally, applying a simple translation yields (\ref{prop2.1eqn}), as required. 
\end{proof}

\begin{remark}The first part of the proof involved obtaining equation (\ref{form01}).  All terms on the right hand-side of (\ref{form01}) are expressed in terms of $W$, except for the final term $ab\mathbb{E}Zf'(W)$.  In the next part of the proof, we used the technique of \cite{gms16} to obtain another expression, equation (\ref{form31}), for $\mathbb{E}Zf''(W)$ in terms of expectations involving $W$.  The `awkward' $\mathbb{E}Zf''(W)$ term can then be cancelled out.  We use this basic strategy, together with more involved calculations, to obtain our Stein equations for $H_3(Z)$ and $H_4(Z)$.
\end{remark} 


The probability density function of the non-central chi-square distribution with $k>0$ degrees of freedom and non-centrality parameter $\lambda>0$ is given by
\begin{equation}\label{noncen}p(x)=\frac{1}{2}\mathrm{e}^{-(x+\lambda)/2}\bigg(\frac{x}{\lambda}\bigg)^{k/4-1/2}I_{\frac{k}{2}-1}(\sqrt{\lambda x}), \quad x>0,
\end{equation}
where $I_{\nu} (x) = \sum_{k=0}^{\infty} \frac{1}{\Gamma(\nu +k+1) k!} \left( \frac{x}{2} \right)^{\nu +2k}$ is a modified Bessel function of the first kind (see \cite{olver}).  For distributional properties see \cite{jkb95}.  If the random variable $Y$ has density (\ref{noncen}), we write $Y\sim \chi_{k}'^2(\lambda)$.  The non-central chi-square distribution $\chi_{k}'^2(\lambda)$ is equal in distribution to $\sum_{i=1}^kX_i^2$, where $X_1,\ldots,X_k$ are independent $N(\mu_i,1)$ random variables, $1\leq i\leq k$, and $\lambda=\sum_{i=1}^k\mu_i^2$.  With the aid of Proposition \ref{prop2.1} we can obtain the following proposition.

\begin{proposition}\label{prop2.2}Let $X$ have the  $\chi_{k}'^2(\lambda)$ distribution.  Let $f\in C^2((0,\infty))$ be such that $\mathbb{E}|Xf^{(j)}(X)|<\infty$, $j=0,1,2$, and $\mathbb{E}|f^{(k)}(X)|<\infty$, $k=0,1$.  Then
\begin{equation}\label{result3}\mathbb{E}\big[4Xf''(X)+(2k-4X)f'(X)+(X-k-\lambda)f(X)\big]=0.
\end{equation}
\end{proposition}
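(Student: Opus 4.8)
The plan is to derive (\ref{result3}) from Proposition \ref{prop2.1} together with the representation $X\stackrel{d}{=}\sum_{i=1}^k X_i^2$, where $X_1,\dots,X_k$ are independent, $X_i\sim N(\mu_i,1)$, and $\lambda=\sum_{i=1}^k\mu_i^2$. The guiding observation is that the characterising equation for a single squared normal, read off from Proposition \ref{prop2.1}, has coefficients that add up cleanly over independent summands.

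First I would treat one summand. Writing $X_i=Z+\mu_i$ with $Z\sim N(0,1)$ gives $X_i^2=Z^2+2\mu_i Z+\mu_i^2$, which is of the form $aZ^2+bZ+c$ with $a=1$, $b=2\mu_i$, $c=\mu_i^2$. Applying Proposition \ref{prop2.1} to this quadratic (so that the random variable $X$ appearing in (\ref{prop2.1eqn}) is our $X_i^2$), the coefficient $ab^2+4a^2(X_i^2-c)$ of $f''$ collapses to $4\mu_i^2+4(X_i^2-\mu_i^2)=4X_i^2$, the coefficient $2a^2-b^2-4a(X_i^2-c)$ of $f'$ collapses to $2-4X_i^2$, and the coefficient $X_i^2-c-a$ of $f$ becomes $X_i^2-\mu_i^2-1$. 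Thus Proposition \ref{prop2.1} already gives the statement for $k=1$, $\lambda=\mu_i^2$:
\begin{equation*}
\mathbb{E}\big[4X_i^2 f''(X_i^2)+(2-4X_i^2)f'(X_i^2)+(X_i^2-\mu_i^2-1)f(X_i^2)\big]=0.
\end{equation*}
The crucial feature is that the $f''$ coefficient simplifies to $4X_i^2$ independently of $\mu_i$, which is what makes the summation below go through.

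Next I would pass to the full sum by conditioning. Fix $i$ and set $S_i=\sum_{j\neq i}X_j^2$, which is independent of $X_i$. Conditionally on $S_i=s$, I apply the displayed $k=1$ identity in the variable $X_i$ to the shifted test function $g(t)=f(t+s)$, for which $g^{(j)}(X_i^2)=f^{(j)}(X_i^2+s)$; since $X_i^2+S_i=X$, taking the conditional expectation and then integrating out $S_i$ yields, for each $i$,
\begin{equation*}
\mathbb{E}\big[4X_i^2 f''(X)+(2-4X_i^2)f'(X)+(X_i^2-\mu_i^2-1)f(X)\big]=0.
\end{equation*}
Summing these $k$ identities and using $\sum_{i=1}^k X_i^2=X$ and $\sum_{i=1}^k\mu_i^2=\lambda$ turns the $f''$ coefficient into $4X$, the $f'$ coefficient into $2k-4X$, and the $f$ coefficient into $X-\lambda-k$, which is precisely (\ref{result3}).

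The algebra is routine; the delicate point, and the main obstacle, is the justification of the conditioning step. One must verify that for (almost) every value $s$ of $S_i$ the shifted function $g(t)=f(t+s)$ satisfies the hypotheses of Proposition \ref{prop2.1} for the quadratic $(z+\mu_i)^2$, whose open range is $(0,\infty)$: since $s\geq 0$ one has $g\in C^2$ on the relevant set (the event $s=0$ has probability zero and is harmless), and the required conditional integrability follows from the global moment assumptions on $f$ via independence. I would also take care matching the integrability hypotheses, noting that the second derivative enters the final identity only through the combination $4Xf''(X)$, which is controlled by the assumed $\mathbb{E}|Xf''(X)|<\infty$; once these conditions are secured, the Fubini-type interchange legitimising ``condition, take expectations, then sum'' is immediate.
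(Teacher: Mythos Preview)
Your proposal is correct and follows essentially the same route as the paper: both apply Proposition~\ref{prop2.1} with $a=1$, $b=2\mu_i$, $c=\mu_i^2$ to each summand $(Z+\mu_i)^2$ to obtain the single-summand operator $4xf''(x)+(2-4x)f'(x)+(x-1-\mu_i^2)f(x)$, and then exploit the linearity of the coefficients to pass to the sum $X=\sum_i X_i^2$. The only difference is that the paper invokes a general result (Section~4.1 of \cite{gms16}) for summing Stein operators with linear coefficients over independent random variables, whereas you spell out that step explicitly via conditioning on $S_i$ and summing---your argument is precisely the content of that cited result in this instance.
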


Proposition \ref{prop2.2} leads to the following Stein equation for the $\chi_{k}'^2(\lambda)$ distribution:
\begin{equation}\label{noncenstein}4xf''(x)+(2k-4x)f'(x)+(x-k-\lambda)f(x)=h(x)-\chi_{k}'^2(\lambda),
\end{equation}
where $\chi_{k}'^2(\lambda)h$ denotes $\mathbb{E}h(Y)$ for $Y\sim\chi_{k}'^2(\lambda)$.  



\begin{proof} Let $X_i\sim\chi_{(1)}'^2(\mu_i)$, $1\leq i\leq k$, be independent, and suppose that $\lambda=\sum_{i=1}^k\mu_i^2$. Then $X_i\stackrel{\mathcal{D}}{=}(Z+\mu_i)^2=Z^2+2\mu_iZ+\mu_i^2$, where $Z\sim N(0,1)$.  Setting $a=1$, $b=2\mu_i$ and $c=\mu_i^2$ in (\ref{prop2.1eqn}) gives that $\mathbb{E}[\mathcal{A}_{X_i}f(X_i)]=0$, $1\leq i\leq k$, where the Stein operator $\mathcal{A}_{X_i}$ is given by
\begin{equation*}\mathcal{A}_{X_i}f(x)=4xf''(x)+(2-4x)f'(x)+(x-1-\mu_i^2)f(x), \quad 1\leq i\leq k.
\end{equation*}
Since the random variables $X_1,\ldots,X_k$ are independent, $Y=\sum_{i=1}^kX_i$, and the Stein operator has linear coefficients, we can deduce (\ref{result3}) form the result of Section 4.1 of \cite{gms16}.
\end{proof}

\subsection{Stein equations for $H_3(Z)$ and $H_4(Z)$}

The following propositions lead to the Stein equations (\ref{herm3}) and (\ref{herm4}).  Some remarks are given after the proofs.

\begin{proposition}\label{prop2.3}Let $X=H_3(Z)=Z^3-3Z$.  Let $f\in C^5(\mathbb{R})$ and suppose $\mathbb{E}|X^2f^{(i)}(X)|<\infty$ for $i=3,5$, $\mathbb{E}|Xf^{(j)}(X)|<\infty$ for $j=0,2,4$, and $\mathbb{E}|f^{(k)}(X)|<\infty$ for $k=1,3,5$.  Denote this class of functions $\mathcal{F}_3$.  Then
\begin{align}&\mathbb{E}\big[486(4-X^2)f^{(5)}(X)-486Xf^{(4)}(X)-27(8-X^2)f^{(3)}(X)\nonumber\\
\label{herm33}&\quad+99Xf''(X)+6f'(X)-Xf(X)\big]=0.
\end{align}
\end{proposition}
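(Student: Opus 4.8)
The plan is to mimic the strategy used in the proof of Proposition~\ref{prop2.1}, but now iterated to higher order. The fundamental tool is the Gaussian integration by parts identity $\mathbb{E}Zg(Z)=\mathbb{E}g'(Z)$, applied repeatedly. Writing $X=H_3(Z)=Z^3-3Z$ and noting $\frac{\mathrm{d}}{\mathrm{d}x}H_3(x)=3x^2-3=3H_2(x)$, I would first establish an analogue of equation~(\ref{for121}): a recursion that expresses $\mathbb{E}Z^kf(X)$ in terms of expectations of the form $\mathbb{E}Z^mf'(X)$ and lower-order $\mathbb{E}Z^mf(X)$ terms, via $\frac{\mathrm{d}}{\mathrm{d}x}\big(x^{k-1}f(H_3(x))\big)=(k-1)x^{k-2}f(H_3(x))+(3x^{k+1}-3x^{k-1})f'(H_3(x))$. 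Integrating against the Gaussian and applying Stein's identity turns the left side into $\mathbb{E}Z^kf(X)$.

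The core difficulty is that, unlike the quadratic case, the Stein operator here has polynomial coefficients in $X=Z^3-3Z$, so when I try to re-express quantities like $\mathbb{E}Xf(X)=\mathbb{E}(Z^3-3Z)f(X)$ using the recursion, I repeatedly generate ``awkward'' terms involving odd or mismatched powers of $Z$ multiplying derivatives of $f$ — terms such as $\mathbb{E}Zf^{(j)}(X)$, $\mathbb{E}Z^3f^{(j)}(X)$, and so on — that cannot be written purely in terms of $X$. The heart of the proof, exactly as flagged in the Remark following Proposition~\ref{prop2.1}, is to produce \emph{several} independent expressions for each such awkward term by applying the recursion in different ways and to $f$ replaced by its derivatives $f',f'',\ldots$, and then to take a linear combination that cancels all of them simultaneously. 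Because $X$ is a cubic, the powers of $Z$ that appear climb quickly, and one needs relations linking $\mathbb{E}Z^mf^{(j)}(X)$ across many values of $m$ and $j$; this is why the resulting operator is fifth order rather than second.

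Concretely, I would proceed by bookkeeping: introduce notation for the family of ``mixed moments'' $M_{m,j}:=\mathbb{E}Z^mf^{(j)}(X)$, derive from the basic recursion a set of linear relations among them (each relation trading one unit of $m$ for a $j$-shift plus Gaussian-integration-by-parts corrections), and then view the goal identity~(\ref{herm33}) as a statement that a particular linear combination of the quantities $\mathbb{E}X^2f^{(5)}(X)$, $\mathbb{E}Xf^{(4)}(X)$, $\mathbb{E}X^2f^{(3)}(X)$, $\mathbb{E}Xf''(X)$, $\mathbb{E}f'(X)$, and $\mathbb{E}Xf(X)$ vanishes. Each of these target terms expands, via $X=Z^3-3Z$ and $X^2=Z^6-6Z^4+9Z^2$, into a combination of the $M_{m,j}$; substituting the recursion relations should collapse everything, and the specific coefficients $486,\,-486,\,-27,\,99,\,6,\,-1$ are precisely the unique combination (up to scaling) that annihilates all the un-expressible cross terms.

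The main obstacle is purely the scale and precision of this elimination: keeping track of the many $M_{m,j}$ and their integration-by-parts relations without error, and identifying the correct null combination. The integrability hypotheses defining $\mathcal{F}_3$ — $\mathbb{E}|X^2f^{(i)}(X)|<\infty$ for $i=3,5$, $\mathbb{E}|Xf^{(j)}(X)|<\infty$ for $j=0,2,4$, and $\mathbb{E}|f^{(k)}(X)|<\infty$ for $k=1,3,5$ — are exactly what guarantee every intermediate expectation (and every boundary term in the integration by parts, which vanishes by the Gaussian decay) is finite and that the manipulations are legitimate. Once the recursion relations are assembled, verifying that the stated combination yields~(\ref{herm33}) is a finite linear-algebra check rather than a conceptual leap; the conceptual content lies entirely in setting up enough relations to make the awkward terms cancel.
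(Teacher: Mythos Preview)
Your proposal is correct and takes essentially the same approach as the paper: derive the recursion $\mathbb{E}Z^kf(W)=3a\mathbb{E}Z^{k+1}f'(W)+b\mathbb{E}Z^{k-1}f'(W)+(k-1)\mathbb{E}Z^{k-2}f(W)$ from Stein's lemma and then eliminate the ``awkward'' mixed moments $\mathbb{E}Z^mf^{(j)}$ by combining several applications of it at different derivative orders. The only practical difference is that the paper works with $W=aZ^3+bZ$ for general $a,b$ (specializing to $a=1$, $b=-3$ only at the end) and performs the elimination sequentially---isolating one awkward term at a time, producing two expressions for it via different routes, and equating---rather than as a single linear-algebra pass over the $M_{m,j}$; as Remark~\ref{remmmm} explains, the general-coefficient parametrization is purely a bookkeeping device to make it transparent which $\mathbb{E}Z^if^{(j)}(W)$ terms should recombine into powers of $W$.
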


\begin{proposition}\label{prop2.4}Let $X=H_4(Z)=Z^4-6Z^2+3$.  Let $f\in C^3((-6,\infty))$ and suppose that $\mathbb{E}|X^2f^{(i)}(X)|<\infty$ for $i=2,3$, $\mathbb{E}|Xf^{(j)}(X)|<\infty$ for $j=0,1,2,3$, and $\mathbb{E}|f^{(k)}(X)|<\infty$ for $k=1,2,3$.  Denote this class of functions $\mathcal{F}_4$.  Then
\begin{equation}\label{herm44} \mathbb{E}\big[192(X+6)(3-X)f^{(3)}(X)+16(X+3)(X-12)f''(X)+4(11X+6)f'(x)-Xf(X)\big]=0.
\end{equation}
\end{proposition}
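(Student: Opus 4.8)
The plan is to follow the cancellation strategy of the Remark after Proposition~\ref{prop2.1}, with Gaussian integration by parts as the engine. Writing $w=x^4-6x^2+3$ so that $X=w(Z)$ and $w'(x)=4x^3-12x$, the product rule $\frac{\mathrm{d}}{\mathrm{d}x}\big(x^{k-1}F(w)\big)=(k-1)x^{k-2}F(w)+(4x^{k+2}-12x^{k})F'(w)$ combined with $\mathbb{E}[Zg(Z)]=\mathbb{E}[g'(Z)]$ gives, for every suitable $F$ and every $k\ge1$, the recursion
\[
\mathbb{E}[Z^{k}F(X)]=(k-1)\mathbb{E}[Z^{k-2}F(X)]+4\mathbb{E}[Z^{k+2}F'(X)]-12\mathbb{E}[Z^{k}F'(X)].
\]
Using this with $k=4$ and then $k=2$ on $F=f$ to eliminate $\mathbb{E}[Z^4f(X)]$ and $\mathbb{E}[Z^2f(X)]$, I obtain the base relation $\mathbb{E}[Xf(X)]=4\mathbb{E}[(Z^6-6Z^4+9Z^2)f'(X)]=4\mathbb{E}[Z^2(Z^2-3)^2f'(X)]$. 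The structural fact that drives everything is that $X$ is even in $Z$ and satisfies $(Z^2-3)^2=X+6$ (whence $X\ge-6$, matching the domain $(-6,\infty)$, the range of $H_4$). The base relation therefore reads $\mathbb{E}[Xf(X)]=4\mathbb{E}[Z^2(X+6)f'(X)]$, and the only obstruction to a closed equation in $X$ is the stray factor $Z^2$.

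First I would remove this factor by writing $Z^2=3+(Z^2-3)$, which splits every expectation $\mathbb{E}[Z^2q(X)f^{(j)}(X)]$ into a ``good'' part $3\mathbb{E}[q(X)f^{(j)}(X)]$ of the desired shape and an ``awkward'' part $\mathbb{E}[(Z^2-3)q(X)f^{(j)}(X)]$, in which $Z^2-3=\pm\sqrt{X+6}$ is not a function of $X$. Applying the recursion once more to an awkward term, together with $(Z^2-3)^2=X+6$ and $\mathbb{E}[Z^4F]=\mathbb{E}[(X+15)F]+6\mathbb{E}[(Z^2-3)F]$, produces the key identity
\[
\mathbb{E}[(Z^2-3)G(X)]=-2\mathbb{E}[G(X)]+4\mathbb{E}[(X+6)G'(X)]+12\mathbb{E}[(Z^2-3)G'(X)],
\]
which trades one awkward term for good terms plus a new awkward term of one higher derivative order. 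I would generate enough such identities (equivalently, apply the displayed recursion with $k=2,4,6$ to $F=f,f',f''$) to reach two independent expressions for the same awkward combination: one obtained ``from below'' through $\mathbb{E}[Xf(X)]$, and one obtained by applying the base recursion to $f'$ and $f''$. Subtracting them so that all $(Z^2-3)$-terms cancel is the exact analogue of cancelling $\mathbb{E}[Zf''(W)]$ in Proposition~\ref{prop2.1}, and what survives is a closed equation in $X$.

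The hard part will be the bookkeeping of this cancellation: since $(Z^2-3)$ is a genuine extra degree of freedom and each elimination raises the derivative order, the main obstacle is to verify that the escalating hierarchy of awkward terms actually closes, and closes precisely at third order. This closure, rather than an unbounded escalation, is exactly the feature that makes the $H_4$ equation lower order than the $H_3$ equation and that Remark~\ref{rem3} addresses: the evenness of $H_4$ (equivalently, that $X$ is a quadratic function of $U=Z^2\sim\chi^2_1$, for which a first-order gamma Stein operator is available) collapses the system, whereas $H_3=Z(Z^2-3)$ carries an extra odd factor and closes only at fifth order. I would then expand the surviving coefficients—here it is cleanest to work in $y=X+6$, since $(X+6)(3-X)$, $(X+3)(X-12)$ and $11X+6$ become low-degree polynomials in $y$—to match (\ref{herm44}), and finally check that the integrability conditions defining $\mathcal{F}_4$ justify every integration by parts, the boundary terms vanishing by the decay of the Gaussian density.
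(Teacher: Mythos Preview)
Your proposal is correct and follows essentially the same route as the paper: iterated Gaussian integration by parts to push $\mathbb{E}[Xf(X)]$ into higher derivatives, isolation of a single ``awkward'' moment that is not a function of $X$, and cancellation of that moment by producing two independent expressions for it---one from rearranging the base relation and one from applying the same relation to $f'$. Your recursion, your base relation $\mathbb{E}[Xf(X)]=4\,\mathbb{E}[Z^2(X+6)f'(X)]$, and your key identity $\mathbb{E}[(Z^2-3)G(X)]=-2\,\mathbb{E}[G(X)]+4\,\mathbb{E}[(X+6)G'(X)]+12\,\mathbb{E}[(Z^2-3)G'(X)]$ all check out.

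The one organizational difference worth noting is this. The paper deliberately works with $W=aZ^4+bZ^2$ for general $a,b$ and only specialises at the end; as explained in Remark~\ref{remmmm}, carrying the free parameters is what tells the author which $\mathbb{E}[Z^i f^{(j)}(W)]$ terms to group into $\mathbb{E}[W^k f^{(j)}(W)]$. You instead fix $a=1$, $b=-6$, $c=3$ from the outset and replace that bookkeeping device by the algebraic identity $(Z^2-3)^2=X+6$, which lets you recognise the ``good'' combinations directly and makes $(Z^2-3)$ (rather than $Z^2$) the natural awkward variable. Both devices solve the same problem---knowing how to recombine monomials in $Z$ into polynomials in $X$---and lead to the same third-order closure; yours is arguably tidier for this particular polynomial but does not generalise as readily to $aZ^4+bZ^2+c$ with $b^2\neq-4ac+$const. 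A minor slip: the remark you cite for the order heuristic is not Remark~\ref{rem3} (which concerns sufficiency of the characterisation) but the preceding remark on why the $H_4$ operator is of lower order than the $H_3$ one.
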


\noindent{\emph{Proof of Proposition \ref{prop2.3}.}}  Let $W=aZ^3+bZ$.  We shall later set $a=1$ and $b=-3$.  Since $\frac{\mathrm{d}}{\mathrm{d}x}\big(x^{k-1}f(w)\big)=(3ax^{k+1}+bx^{k-1})f'(w)+(k-1)x^{k-2}f(w)$ for $k\geq1$, where $w=ax^3+bx$, we have from (\ref{stein lemma}) that
\begin{equation}\label{for1}\mathbb{E}Z^kf(W)=3a\mathbb{E}Z^{k+1}f'(W)+b\mathbb{E}Z^{k-1}f'(W)+(k-1)\mathbb{E}Z^{k-2}f(W), \quad k\geq1.
\end{equation}
Using (\ref{for1}) gives that
\begin{align*}\mathbb{E}Wf(W)&=a\mathbb{E}Z^3f(W)+b\mathbb{E}Zf(W) \\
&=3a^2\mathbb{E}Z^4f'(W)+ab\mathbb{E}Z^2f'(W)+2a\mathbb{E}Zf(W)+3ab\mathbb{E}Z^2f'(W)+b^2\mathbb{E}f'(W) \\
&=3a^2\mathbb{E}Z^4f'(W)+4ab\mathbb{E}Z^2f'(W)+2a\mathbb{E}Zf(W)+b^2\mathbb{E}f'(W).
\end{align*} 
Another application of (\ref{for1}) gives that
\begin{align*}\mathbb{E}Wf(W)&=9a^3\mathbb{E}Z^5f''(W)+3a^2b\mathbb{E}Z^3f''(W)+9a^2\mathbb{E}Z^2f'(W)+12a^2b\mathbb{E}Z^3f''(W)\\
&\quad+4ab^2\mathbb{E}Zf''(W)+4ab\mathbb{E}f'(W)+6a^2\mathbb{E}Z^2f'(W)+2ab\mathbb{E}f'(W)+b^2\mathbb{E}f'(W) \\
&=9a^3\mathbb{E}Z^5f''(W)+15a^2b\mathbb{E}Z^3f''(W)+4ab^2\mathbb{E}Zf''(W)+15a^2\mathbb{E}Z^2f'(W)\\
&\quad+(6ab+b^2)\mathbb{E}f'(W).
\end{align*}
Following the same procedure again yields
\begin{align}\mathbb{E}Wf(W)&=27a^4\mathbb{E}Z^6f^{(3)}(W)+9a^3b\mathbb{E}Z^4f^{(3)}(W)+36a^3\mathbb{E}Z^3f''(W)\nonumber\\
&\quad+45a^3b\mathbb{E}Z^4f^{(3)}(W)+15a^2b^2\mathbb{E}Z^2f^{(3)}(W)+30a^2b\mathbb{E}Zf''(W)\nonumber\\
&\quad+12a^2b^2\mathbb{E}Z^2f^{(3)}(W)+4ab^3\mathbb{E}f^{(3)}(W)\nonumber \\
&\quad+45a^3\mathbb{E}Z^3f''(W)+15a^2b\mathbb{E}Zf''(W)+15a^2\mathbb{E}f'(W)+(6ab+b^2)\mathbb{E}f'(W)\nonumber \\
&=27a^4\mathbb{E}Z^6f^{(3)}(W)+54a^3b\mathbb{E}Z^4f^{(3)}(W)+27a^2b^2\mathbb{E}Z^2f^{(3)}(W)+81a^3\mathbb{E}Z^3f''(W)\nonumber\\
&\quad+45a^2b\mathbb{E}Zf''(W)+4ab^3\mathbb{E}f^{(3)}(W)+(15a^2+6ab+b^2)\mathbb{E}f'(W)\nonumber \\
&=27a^2\mathbb{E}W^2f^{(3)}(W)+4ab^3\mathbb{E}f^{(3)}(W)+(15a^2+6ab+b^2)\mathbb{E}f'(W)\nonumber\\
&\quad+81a^3\mathbb{E}Z^3f''(W)+45a^2b\mathbb{E}Zf''(W)\nonumber \\
&=27a^2\mathbb{E}W^2f^{(3)}(W)+4ab^3\mathbb{E}f^{(3)}(W)+(15a^2+6ab+b^2)\mathbb{E}f'(W)\nonumber\\
\label{ex1}&\quad+81a^2\mathbb{E}Wf''(W)-36a^2b\mathbb{E}Zf''(W).
\end{align}
We now note that
\begin{align}\mathbb{E}Zf''(W)&=3a\mathbb{E}Z^2f^{(3)}(W)+b\mathbb{E}f^{(3)}(W)\nonumber \\
&=9a^2\mathbb{E}Z^3f^{(4)}(W)+3ab\mathbb{E}Zf^{(4)}(W)+(3a+b)\mathbb{E}f^{(3)}(W)\nonumber \\
\label{eqn1}&=9a\mathbb{E}Wf^{(4)}(W)-6ab\mathbb{E}Zf^{(4)}(W)+(3a+b)\mathbb{E}f^{(3)}(W).
\end{align}
On substituting (\ref{eqn1}) into (\ref{ex1}) we obtain
\begin{align}\mathbb{E}Wf(W)&=27a^2\mathbb{E}W^2f^{(3)}(W)+4ab^3\mathbb{E}f^{(3)}(W)+(15a^2+6ab+b^2)\mathbb{E}f'(W)\nonumber\\
&\quad+81a^2\mathbb{E}Wf''(W)-324a^3b\mathbb{E}Wf^{(4)}(W)-(108a^3b+36a^2b^2)\mathbb{E}f^{(3)}(W)\nonumber\\
\label{ex2}&\quad+216a^3b^2\mathbb{E}Zf^{(4)}(W).
\end{align}
On rearranging (\ref{ex2}) we obtain
\begin{align}216a^3b^2\mathbb{E}Zf^{(4)}(W)&=\mathbb{E}Wf(W)+324a^3b\mathbb{E}Wf^{(4)}(W)\nonumber\\
&\quad+(108a^3b+36a^2b^2-4ab^3)\mathbb{E}f^{(3)}(W)-27a^2\mathbb{E}W^2f^{(3)}(W)\nonumber\\
\label{ex3}&\quad-81a^2\mathbb{E}Wf''(W)-(15a^2+6ab+b^2)\mathbb{E}f'(W),
\end{align}
and from (\ref{ex1}) applied to $f''$ we obtain
\begin{align}216a^3b^2\mathbb{E}Zf^{(4)}(W)&=6ab\big\{27a^2\mathbb{E}W^2f^{(5)}(W)+4ab^3\mathbb{E}f^{(5)}(W)+81a^2\mathbb{E}Wf^{(4)}(W)\nonumber\\
&\quad+(15a^2+6ab+b^2)\mathbb{E}f^{(3)}(W)-\mathbb{E}Wf''(W)\big\}\nonumber\\ 
&=162a^3b\mathbb{E}W^2f^{(5)}(W)+24a^2b^4\mathbb{E}f^{(5)}(W)+486a^3b\mathbb{E}Wf^{(4)}(W)\nonumber\\
\label{ex4}&\quad+(90a^3b+36a^2b^2+6ab^3)\mathbb{E}f^{(3)}(W)-6ab\mathbb{E}Wf''(W).
\end{align}
Combining (\ref{ex3}) and (\ref{ex4}) now yields
\begin{align*}&\mathbb{E}\big[(162a^3bW^2+24a^2b^4)f^{(5)}(W)+162a^3bWf^{(4)}(W)+(27a^2W^2-18a^3b+2ab^3)f^{(3)}(W)\\
&\quad+(81a^2-6ab)Wf''(W)+(15a^2+6ab+b^2)f'(W)-Wf(W)\big]=0.
\end{align*}
Finally, on setting $a=1$ and $b=-3$ we obtain (\ref{herm33}), as required. \hfill $\Box$

\vspace{3mm}


\noindent{\emph{Proof of Proposition \ref{prop2.4}.}} To simplify the calculations, we find a characterising equation for the random variable $W=aZ^4+bZ^2$ and then apply a translation to deduce a characterising equation for the random variable $aZ^4+bZ^2+c$.  We shall then set $a=1$, $b=-6$ and $c=3$.  Since $\frac{\mathrm{d}}{\mathrm{d}x}\big(x^{k-1}f(w)\big)=(4ax^{k+2}+2bx^{k})f'(w)+(k-1)x^{k-2}f(w)$ for $k\geq1$, where $w=ax^4+bx^2$, we have from (\ref{stein lemma}) that
\begin{equation}\label{for12}\mathbb{E}Z^kf(W)=4a\mathbb{E}Z^{k+2}f'(W)+2b\mathbb{E}Z^{k}f'(W)+(k-1)\mathbb{E}Z^{k-2}f(W), \quad k\geq1.
\end{equation}
Using (\ref{for12}) gives that
\begin{align*}\mathbb{E}Wf(W)&=a\mathbb{E}Z^4f(W)+b\mathbb{E}Z^2f(W) \\
&=4a^2\mathbb{E}Z^6f'(W)+2ab\mathbb{E}Z^4f'(W)+3a\mathbb{E}Z^2f(W)\\
&\quad+4ab\mathbb{E}Z^4+2b^2\mathbb{E}Z^2f'(W)+b\mathbb{E}f(W) \\
&=4a^2\mathbb{E}Z^6f'(W)+6ab\mathbb{E}Z^4f'(W)+3a\mathbb{E}Z^2f(W)+2b^2\mathbb{E}Z^2f'(W)+b\mathbb{E}f(W).
\end{align*} 
Another application of (\ref{for12}) gives that
\begin{align}\mathbb{E}Wf(W)&=16a^3\mathbb{E}Z^8f''(W)+8a^2b\mathbb{E}Z^6f''(W)+20a^2\mathbb{E}Z^4f'(W)
+24a^2b\mathbb{E}Z^6f''(W)\nonumber\\
&\quad+12ab^2\mathbb{E}Z^4f''(W)+18ab\mathbb{E}Z^2f'(W)+8ab^2\mathbb{E}Z^4f''(W)+4b^3\mathbb{E}Z^2f''(W)
\nonumber\\
&\quad+2b^2\mathbb{E}f'(W)+12a^2\mathbb{E}Z^4f'(W)+6ab\mathbb{E}Z^2f'(W)+3a\mathbb{E}f(W)+b\mathbb{E}f(W)\nonumber \\
&=16a^3\mathbb{E}Z^8f''(W)+32a^2b\mathbb{E}Z^6f''(W)+20ab^2\mathbb{E}Z^4f''(W)+4b^3\mathbb{E}Z^2f''(W)\nonumber\\
&\quad
+32a^2\mathbb{E}Z^4f'(W)+24ab\mathbb{E}Z^2f'(W)+2b^2\mathbb{E}f'(W)+(3a+b)\mathbb{E}f(W) \nonumber\\
&=16a\mathbb{E}W^2f''(W)+4b^2\mathbb{E}Wf''(W)+32a\mathbb{E}Wf'(W)+2b^2\mathbb{E}f'(W)\nonumber\\
\label{form0}&\quad+(3a+b)\mathbb{E}f(W)-8ab\mathbb{E}Z^2f'(W).
\end{align}
We now note that
\begin{align}\mathbb{E}Z^2f'(W)&=4a\mathbb{E}Z^4f''(W)+2b\mathbb{E}Z^2f''(W)+\mathbb{E}f'(W)\nonumber\\
\label{form1}&=4\mathbb{E}Wf''(W)-2b\mathbb{E}Z^2f''(W)+\mathbb{E}f'(W).
\end{align}
On substituting (\ref{form1}) into (\ref{form0}) we obtain
\begin{align}\mathbb{E}Wf(W)&=16a\mathbb{E}W^2f''(W)+(4b^2-32ab)\mathbb{E}Wf''(W)+32a\mathbb{E}Wf'(W)\nonumber\\
\label{form2}&\quad+(2b^2-8ab)\mathbb{E}f'(W)+(3a+b)\mathbb{E}f(W)+16ab^2Z^2f''(W).
\end{align}
On rearranging (\ref{form2}) we obtain
\begin{align}16ab^2\mathbb{E}Z^2f''(W)&=\mathbb{E}Wf(W)-16a\mathbb{E}W^2f''(W)+(32ab-4b^2)\mathbb{E}Wf''(W)\nonumber\\
\label{form3}&\quad-32a\mathbb{E}Wf'(W)+(8ab-2b^2)\mathbb{E}f'W)-(3a+b)\mathbb{E}f(W),
\end{align}
and from (\ref{form0}) applied to $f'$ we obtain
\begin{align}16ab^2\mathbb{E}Z^2f''(W)&=2b\big\{-\mathbb{E}Wf'(W)+16a\mathbb{E}W^2f^{(3)}(W)+4b^2\mathbb{E}Wf^{(3)}(W)\nonumber\\
&\quad+32a\mathbb{E}Wf''(W)+2b^2\mathbb{E}f''(W)+(3a+b)\mathbb{E}f'(W)\big\} \nonumber\\
&=-2b\mathbb{E}Wf'(W)+32ab\mathbb{E}W^2f^{(3)}(W)+8b^3\mathbb{E}Wf^{(3)}(W)\nonumber\\
\label{form4}&\quad+64ab\mathbb{E}Wf''(W)+4b^3\mathbb{E}f''(W)+(6ab+2b^2)\mathbb{E}f'(W).
\end{align}
Combining (\ref{form3}) and (\ref{form4}) now yields
\begin{align*}&\mathbb{E}\big[(32abW^2+8b^3W)f^{(3)}(W)+(16aW^2+(4b^2+32ab)W+4b^3)f''(W)\\
&\quad+((32a-2b)W+4b^2-2ab)f'(W)+(3a+b-W)f(W)\big]=0.
\end{align*}
By applying a translation, we deduce the following characterising equation for the random variable $X=aZ^4+bZ^2+c$:
\begin{align*}&\mathbb{E}\big[(32ab(X-c)^2+8b^3(X-c))f^{(3)}(X)+(16a(X-c)^2+(4b^2+32ab)(X-c)+4b^3)f''(X)\\
&\quad+((32a-2b)(X-c)+4b^2-2ab)f'(X)-(X-3a-b-c)f(X)\big]=0.
\end{align*}
Finally, on setting $a=1$, $b=-6$ and $c=3$ we obtain (\ref{herm44}), as required. \hfill $\Box$


\begin{remark}\label{remmmm}In the proofs of Propositions \ref{prop2.3} and \ref{prop2.4}, we actually obtained characterising equations for the more general random variables $aZ^3+bZ$ and $aZ^4+bZ^2+c$.  The reason for working with these more general random variables is that it enables one to combine terms of the type $\mathbb{E}Z^if^{(j)}(W)$ to form terms of the type $\mathbb{E}W^kf^{(j)}(W)$.  For example, \emph{a priori} it is not immediately clear whether the term $\mathbb{E}Z^3f^{(3)}(W)$ should be combined to form $\mathbb{E}W^3f^{(3)}(W)$ or $\mathbb{E}Wf^{(3)}(W)$, but when multiplied by an appropriate constant in terms of $a$ and $b$ this become clear.  Indeed, a reasonable amount of trial and error was used in deriving the characterising equations for $aZ^3+bZ$ and $aZ^4+bZ^2+c$, and the introduction of the constants $a$, $b$ and $c$ greatly helped in this regard. 

It is natural to ask for Stein operators for the more general polynomials $P_3(Z)=aZ^3+bZ^2+cZ+d$ and $P_4(Z)=aZ^4+bZ^3+cZ^2+dZ+e$.  However, this leads to non-trivial further complications.  A discussion on why there is an increase in difficulty for the general cubic $P_3(Z)$ is given in Section \ref{sec2.3}.  That there is an increase in difficulty for the general quartic $P_4(Z)$ is easy to see: a Stein equation for $P_4(Z)$ must include the fifth order Stein equation for $aZ^3+bZ$ as a special case.
\end{remark}

\begin{remark}It should be possible, via the density method \cite{stein2, ley} for example, to obtain Stein equations for $H_3(Z)$ and $H_4(Z)$ that are differential equations with order less than five and three respectively, although this would most likely come at the cost of more complicated coefficients than the polynomial coefficients of (\ref{herm3}) and (\ref{herm4}).   We strongly believe that (\ref{herm3}) and (\ref{herm4}) are the minimal order Stein equations with polynomial coefficients for $H_3(Z)$ and $H_4(Z)$, in the sense that there do not exist Stein equations for $H_3(Z)$ and $H_4(Z)$ that are polynomial coefficient differential equations of order less than five and three, respectively.  We are unable to prove this, but a future work will investigate this assertion for polynomials of a fixed low degree.



\end{remark}

\begin{remark} Let us now provide some insight regarding the orders of the Stein operators obtained in this note.  It is perhaps surprising that our Stein operator for the non-central chi-square distribution is of second order, whereas the classical Stein operator for the centered chi-square distribution is first order order.  This is a consequence of the duality between Stein operators and differential equations satisfied by the p.d.f$.$ of the distribution of interest.  Roughly speaking, if the p.d.f$.$ satisfies a $k$-th order differential equation with polynomial coefficients, then, under fairly mild assumptions, there will exist a $k$-th order Stein operator for the distribution (see \cite{gaunt ngb, gms16,ley} for more details).  The presence of the modified Bessel function of the first kind (which is known to satisfy a second order differential equation with polynomial coefficients) means that the p.d.f$.$ of the non-central chi-square distribution satisfies a second order differential equation, and so by duality we arrive at a second order Stein operator.  In contrast, the p.d.f$.$ of the centered chi-square distribution is given in terms of elementary functions and satisfies a first order differential equation. 

We now provide two pieces of intuition as to why our Stein operator for $H_3(Z)$ is of greater order than our Stein operator for $H_4(Z)$.  One is again due to the duality between p.d.f.s and Stein operators.  The p.d.f.s for $H_3(Z)$ and $H_4(Z)$ can be obtained via elementary, but tedious, calculations.  We omit the details, but make the observation that the p.d.f.s are defined piecewise, which helps explain why the Stein operators are of order greater than one (see pp$.$ 574--575 of \cite{pike} for details regarding the derivation of Stein equations for the piecewisely defined Laplace distribution).  To see this, note that in order to find the p.d.f.s of $H_3(Z)$ and $H_4(Z)$ via the usual method for finding p.d.f.s of functions of random variables one must solve the equations $z^3-3z=x$ and $z^4-6z^2+3=x$, respectively, for $z$, and the nature of the solutions will change as $x$ varies over the real line.  The solution of $z^3-3z=x$ is more complicated than that of $z^4-6z^2+3=x$, as it requires the use of the cubic formula rather than the quadratic formula, and consequently the p.d.f$.$ of $H_3(Z)$ will take a more complicated form than that of $H_4(Z)$.  It therefore seems plausible that the p.d.f$.$ of $H_3(Z)$ will only satisfy, in a piecewise sense, a polynomial coefficient differential equation that is of higher order than that of $H_4(Z)$, meaning by duality that the Stein operator will also be of higher order.

Another piece of intuition is that obtaining a Stein equation for $H_4(Z)$ is no more difficult than obtaining one for $Z^4-6Z^2$.  Now $Z^2$ has the $\chi_{(1)}^2$ distribution, for which the classical Stein equation is first order.  Therefore finding a Stein equation for $Z^4-6Z^2$ is equivalent to finding one for $Y^2-6Y$, where $Y\sim\chi_{(1)}^2$, which one would not expect to be much more involved than finding one for $aZ^2+bZ$, which was done in Proposition \ref{prop2.1}.
\end{remark}


\begin{remark}\label{rem3}We were unable to solve the Stein equations (\ref{herm3}) and (\ref{herm4}), and as a result have not been able to bound the solutions in terms of supremum norms of the test function $h$.  Therefore we cannot use the standard method for proving sufficiency of Stein characterisations (see, for example, the proof of Lemma 2.1 of \cite{chen}).  Moreover, the law of $H_n(Z)$, $n\geq3$, is not determined by its moments.  Therefore, we cannot appeal to Proposition 4.8 of \cite{gms16} to prove sufficiency.  (Note that one can perfecly well apply Stein's method without establishing sufficiency; see, for example, \cite{pekoz}.)  However, we can use the argument used to prove Proposition 4.8 of \cite{gms16} to obtain a weak form of sufficiency. If for a random variable $W_3$, which shares its first five moments with $H_3(Z)$, the expectation (\ref{herm33}) holds for all $f\in\mathcal{F}_3$, as defined in Proposition \ref{prop2.3}, then $W_3$ must share all moments with $H_3(Z)$, and if for a random variable $W_4$, which shares its first three moments with $H_4(Z)$, the expectation (\ref{herm44}) holds for all $f\in\mathcal{F}_4$ then $W_4$ must share all moments with $H_4(Z)$. 
\end{remark}

\subsection{Stein equations for $H_n(Z)$, $n\geq5$}\label{sec2.3}

The next obvious problems are to find Stein equations for $P_3(Z)$, where $P_3$ is a general cubic, and $H_n(Z)$, $n\geq5$.  Based on our Stein equations for $aZ^2+bZ+c$, $H_3(Z)$ and $H_4(Z)$, we expect that the order of increasing difficulty is: $P_3(Z)$, $H_6(Z)$ and then $H_5(Z)$.  Before tackling the general case of $P_3(Z)$, it may be worth working out the special case $W=aZ^3+bZ^2$.  Even here one faces additional difficulties.  Unlike in our proofs of Propositions \ref{prop2.1}, \ref{prop2.3} and \ref{prop2.4}, it is not possible for any $i\geq1$ to find a $j\geq1$ such that $\mathbb{E}Z^if(W)=\mathbb{E}Z^if^{(j)}(W)+R$, where $R$ is an expression that can be solely written in terms of expectations involving $W$.  (This author has encountered this same problem whilst trying to obtain Stein operators for various product distributions, and considers it to be important for methods to be developed to overcome this difficulty.)  This means that a more sophisticated implementation of the technique of \cite{gms16} may be needed to deal with the `awkward' expressions $\mathbb{E}Z^if^{(k)}(W)$.  Even with a more ingenious implementation of this technique, the complexity of the calculations would most likely mean that the approach would become intractable for $H_7(Z)$, $H_8(Z)$ and beyond.  Obtaining Stein equations for such random variables will therefore most likely require the introduction of new techniques.

The following observation may be useful in the quest for Stein equations for $H_n(Z)$, $n\geq5$. In Table 1, we record the coefficient of the highest order derivative of the Stein equation for $H_n(Z)$ for $n=1,\ldots,4$, and note its connection to the local maxima and minima of the Hermite polynomials.  This allows us to conjecture the corresponding terms for the Stein equations for $H_5(Z)$ and $H_6(Z)$. For $H_5(Z)$ we expect the leading coefficient to be
\begin{align*}&\Big(x-4\sqrt{6(3+\sqrt{6})}\Big)\Big(x-4\sqrt{6(3+\sqrt{6})}\Big)\Big(x+4\sqrt{6(3+\sqrt{6})}\Big)\Big(x+4\sqrt{6(3-\sqrt{6})}\Big)\\
&=x^4-576x^2+27648,
\end{align*}
and for $H_6(Z)$ we expect it to be
\begin{equation*}\big(x-20(\sqrt{10}-2)\big)\big(x+20(2+\sqrt{10})\big)\big(x+15\big)=x^3+95x^2-1200x-3600.
\end{equation*}

In the quest for Stein equations for $H_n(Z)$, $n\geq5$, it would also be useful to have a conjecture on the order of the Stein equations.  This author does not have a conjecture on the exact order, but does strongly suspect that the Stein equations for $H_5(Z)$ and $H_6(Z)$ will be of at least ninth and sixth order, respectively.   

\begin{table}[ht]
\caption{Local maxima and minima of Hermite polynomials $H_n(x)$ and coefficient of highest order derivative of the Stein equation for $H_n(Z)$.} 
\centering

\begin{tabular}{ |p{1cm}|p{4.8cm}|p{5.4cm}|p{2.4cm}|  }
 \hline
  & Local maxima & Local minima & Coefficient\\
  \hline
 $H_1(x)$   & -    & - & 1 \vspace{1mm} \\
 $H_2(x)$ & - & $-1$ & $x$ \vspace{1mm} \\
 $H_3(x)$   & $2$ & $-2$ &  $x^2-4$ \vspace{1mm} \\
 $H_4(x)$  & 3 & $-6$ (twice) & $(x+6)(x-3)$ \\
 $H_5(x)$  & $4\sqrt{6(3+\sqrt{6})}$, $4\sqrt{6(3-\sqrt{6})}$ & $-4\sqrt{6(3+\sqrt{6})}$, $-4\sqrt{6(3-\sqrt{6})}$ & ? \vspace{1mm}\\
 $H_6(x)$  & $20(\sqrt{10}-2)$ (twice) & $-20(2+\sqrt{10})$ (twice), $-15$ & ? \vspace{1mm}\\
 \hline
\end{tabular}
\label{dag}
\end{table}

\subsection*{Acknowledgements}
The author is supported by a Dame Kathleen Ollerenshaw Research Fellowship.  The author would like to thank Ivan Nourdin for suggesting the problem of finding Stein equations for $H_n(Z)$, $n\geq 3$.  The author would also to thank the reviewer for their comments and suggestions which lead to an improved article, and is grateful to Ehsan Azmoodeh and Dario Gasbarra for bringing to my attention a mistake in the Stein equation for $H_4(Z)$ that was present in an earlier version.

\footnotesize

\end{document}